\renewcommand*\subjclass[2][2000]{%
  \def\@subjclass{#2}%
  \@ifundefined{subjclassname@#1}{%
    \ClassWarning{\@classname}{Unknown edition (#1) of Mathematics
      Subject Classification; using '1991'.}%
  }{%
    \@xp\let\@xp\subjclassname\csname subjclassname@#1\endcsname
  }%
}
\newtheorem{theorem}{Theorem}[section]
\newtheorem{lemma}[theorem]{Lemma}
\newtheorem*{lemma*}{Lemma}
\newtheorem{corollary}[theorem]{Corollary}
\theoremstyle{definition}
\newtheorem{conjecture}[theorem]{Conjecture}
\theoremstyle{remark}
\newtheorem{remark}[theorem]{Remark}
\numberwithin{equation}{section}
\def\XXint#1#2#3{{\setbox0=\hbox{$#1{#2#3}{\int}$}
\vcenter{\hbox{$#2#3$}}\kern-.5\wd0}}
\def\le{\leqslant}
\def\ge{\geqslant}
\begin{document}

\title{A proof of Khavinson's conjecture in $\mathbf{R}^4$}

\author{David Kalaj}
\address{University of Montenegro, Faculty of Natural Sciences and
Mathematics, Dzordza Va\v singtona  b.b. 81000 Podgorica, Montenegro}
\email{davidkalaj@gmail.com}

\footnote{2010 \emph{Mathematics Subject Classification}: Primary
47B35} \keywords{Harmonic mappings, Schwarz lemma}
\begin{abstract}
The paper deals with an extremal problem for bounded  harmonic functions in the unit
ball of $\mathbf{R}^4$.   We solve the generalized Khavinson    problem in $\mathbf{R}^4$.   This precise problem was formulated   by G. Kresin and V. Maz'ya
for harmonic functions in the unit ball and in the half--space of $\mathbf{R}^n$.  We find
the optimal pointwise   estimates   for   the norm of the gradient of  bounded real--valued harmonic functions.
\end{abstract}
\maketitle

\section{Introduction and statement of the main results}

    In   this   paper we consider the sharp pointwise
estimates for the  gradients of real--valued bounded harmonic functions.  We will first recall the  known
estimates  of this type in the plane and in the space.

For every  fixed            $z=(x,y)\in\mathbf{R}^2_+$ there holds the     optimal gradient estimate
\begin{equation}\label{INEQ.2}
|\nabla V(z)|\le\frac 2\pi \frac 1{y} |V|_\infty,
\end{equation}
where  $V$  is  an arbitrary  bounded  harmonic  functions in  the  upper  half--plane $H^2=\mathbf{R}^2_+$, and $|V|_\infty=\sup_{z\in \mathbf{R}^2_+}|V(z)|$.
Using  the conformal  transformation of the unit disk $\mathbf{B}^2$ onto $\mathbf{R}^2_+$ one easily derives
\begin{equation}\label{INEQ.CLASSICAL}
|\nabla U(z)|\le \frac 4\pi \frac1{1-|z|^2}|U|_\infty,
\end{equation}
for    $x\in \mathbf{B}^2$,  where $U$    is  harmonic  and bounded  in the unit disc \cite{COLONNA.INDIANA}.    This classical
result is  improved in the recent paper of D. Kalaj and M. Vuorinen \cite{KALAJ.PAMS}. Their form of
the above  inequality  says that
\begin{equation}\label{INEQ.KALAJ}
|\nabla U(z)|\le \frac 4\pi \frac{1-U(z)^2}{1-|z|^2}.
\end{equation}
This relation requires that $U$ is  bounded by $1$ in the disc $\mathbf{B}^2$.     The proof of   the
inequality \eqref{INEQ.KALAJ} lies on the classical Schwarz lemma for bounded analytic
functions.

Recently G.   Kresin and
V. Maz'ya \cite{KRESIN.DCDS}      proved the   following  generalization od \eqref{INEQ.2}:
\begin{equation}\label{EST.KR.MA.H}
|\nabla V (x)|\le
\frac 4 {\sqrt{\pi}} \frac {(n-1)^{ (n-1)/2} } { n^{n/2} }\frac {\Gamma(n/2)} {\Gamma((n-1)/2)}
\frac 1{x_n} |V|_\infty.
\end{equation}
Here,   $V$ is a bounded harmonic function in the half--space $\mathbf{R}^n_+$, $|V|_\infty=\sup_{y\in \mathbf{R}^n_+}|V(y)|$  , and $x = (x',x_n)\in
\mathbf{R}^n_+$  is fixed. These  optimal poitwise  estimates arise arise while proving Khavinson conjecture in the halfspace setting. In order to  formulate the conjecture we introduce
the notation we need.

For every    fixed $x$ let $\mathcal{C}(x)$   denote      the optimal number for the gradient  estimate
\begin{equation}\label{FIRST.EST}
\left|\nabla  U(x)\right|\le \mathcal{C}(x)|U|_\infty,
\end{equation}
where  $U$ is  harmonic and  bounded in $\mathbf{B}^n$ or $\mathbf{R}^n_+$.        Similarly, for every
$\mathbf{v}\in \mathbb{R}^n,\, |\mathbf {v}|=1$  denote  by  $\mathcal{C}(x,\mathbf{v})$    the optimal
number for the gradient estimate in the direction $\mathbf{v}$, i.e., the smallest number such that the
following   relation holds
\begin{equation*}
\left|\left< \nabla U(x), \mathbf {v}\right> \right|\le \mathcal{C}(x,\mathbf{v}) |U|_\infty
\end{equation*}
for every bounded and harmonic  $U$. Since
\begin{equation*}
\left|\nabla U(x)\right| = \sup_{\mathbf{v}\in\partial \mathbf{B}^n} \left|\left<\nabla U(x), \mathbf{v}\right>\right|,
\end{equation*}
it follows that
\begin{equation}\label{VAR.PROBLEM}
\mathcal{C}(x)=\sup_{\mathbf {v}}   \mathcal{C}(x,\mathbf {v}).
\end{equation}
It turned out that the variational problem \eqref{VAR.PROBLEM} is a hard problem,           especially
in the unit ball setting.                            The generalized  Khavinson conjecture states that

\begin{conjecture}\label{CONJ.KH} For $x\in\mathbf{B}^n$ we have
\begin{equation}\label{khav}
\mathcal{C}(x) = \mathcal {C} (x,\mathbf{n}_x),
\end{equation}
where $\mathbf{n}_x=x/|x|$ is the vector normal  to the boundary at $x$.
\end{conjecture}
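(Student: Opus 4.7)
The natural starting point is the Poisson integral representation: every bounded harmonic $U$ on $\mathbf{B}^n$ is the Poisson integral of its boundary values $f\in L^\infty(\partial\mathbf{B}^n)$ against the kernel
\begin{equation*}
P(x,\zeta)=\frac{1}{\omega_{n-1}}\frac{1-|x|^2}{|x-\zeta|^n},\qquad x\in \mathbf{B}^n,\ \zeta\in\partial\mathbf{B}^n,
\end{equation*}
where $\omega_{n-1}$ is the surface area of $\partial\mathbf{B}^n$. Differentiating under the integral,
\begin{equation*}
\inn{\nabla U(x),\mathbf{v}}=\int_{\partial\mathbf{B}^n} K_{\mathbf{v}}(x,\zeta)\, f(\zeta)\,d\sigma(\zeta),\qquad K_{\mathbf{v}}(x,\zeta):=\inn{\nabla_x P(x,\zeta),\mathbf{v}}.
\end{equation*}
Choosing $f(\zeta)=\sign K_{\mathbf{v}}(x,\zeta)$ and approximating by continuous boundary data saturates the bound, so $\mathcal{C}(x,\mathbf{v})=\int_{\partial\mathbf{B}^n}|K_{\mathbf{v}}(x,\zeta)|\,d\sigma(\zeta)$. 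Conjecture \ref{CONJ.KH} is thereby reduced to the purely geometric assertion that $\mathbf{v}\mapsto \int_{\partial\mathbf{B}^n}|K_{\mathbf{v}}(x,\cdot)|\,d\sigma$ attains its maximum on $\partial\mathbf{B}^n$ at $\mathbf{v}=\mathbf{n}_x$.

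\medskip

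\noindent I would then exploit symmetry. Fix $x=r\mathbf{n}_x$ and choose coordinates so that $\mathbf{n}_x=(0,\dots,0,1)$. By rotational invariance of the integrand about the $x_n$-axis, every unit $\mathbf{v}$ yields the same $\mathcal{C}(x,\mathbf{v})$ as one of the form $\mathbf{v}_\theta=(\sin\theta,0,\dots,0,\cos\theta)$. Setting
\begin{equation*}
F(\theta):=\int_{\partial\mathbf{B}^n}|\cos\theta\, A(\zeta)+\sin\theta\, B(\zeta)|\,d\sigma(\zeta),
\end{equation*}
with $A(\zeta)=\partial_{x_n} P(x,\zeta)|_{x=r\mathbf{n}_x}$ (depending on $\zeta_n$ alone) and $B(\zeta)=\partial_{x_1}P(x,\zeta)|_{x=r\mathbf{n}_x}$ (linear in $\zeta_1$, with a coefficient depending on $\zeta_n$ alone), the conjecture reduces to $F(\theta)\le F(0)$. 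The substitution $\zeta_1\mapsto -\zeta_1$ shows $F$ is even and $\pi$-periodic, so it suffices to prove $F'(\theta)\le 0$ on $(0,\pi/2)$.

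\medskip

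\noindent The core of the argument is an analysis of $F'$. Away from a measure-zero set of degeneracies,
\begin{equation*}
F'(\theta)=\int_{\partial\mathbf{B}^n}\sign(\cos\theta\, A+\sin\theta\, B)\,(\cos\theta\, B-\sin\theta\, A)\,d\sigma,
\end{equation*}
so the sign of $F'$ is controlled by the nodal set $N_\theta=\{\zeta:\cos\theta\, A(\zeta)+\sin\theta\, B(\zeta)=0\}$. Integrating out the rotational degrees of freedom in $(\zeta_2,\dots,\zeta_{n-1})$ against a Jacobian depending only on $\zeta_1^2+\zeta_n^2$ reduces the problem to a planar integral in $(\zeta_1,\zeta_n)$ on the unit disc, with $N_\theta$ cut out by an explicit rational curve. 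The plan is to parametrise $N_\theta$ so as to rewrite $F'(\theta)$ as a one-dimensional integral over $N_\theta$ via Stokes, ultimately reducing the desired inequality to the nonnegativity of a single-variable integrand.

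\medskip

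\noindent The principal obstacle---and the reason the conjecture is open in full generality---is precisely this last step. For arbitrary $n$ and $r\in(0,1)$ no convenient rationalisation of $N_\theta$ presents itself, and the reduction couples ultraspherical polynomials of different parities in a way that resists a direct comparison. A more structural variant is to expand $K_{\mathbf{v}_\theta}$ in the Gegenbauer basis adapted to the axis $\mathbf{n}_x$, turning the task into an $L^1$-inequality for linear combinations of two explicit families of ultraspherical polynomials on $[-1,1]$. The cases already settled---$n=2$ via \eqref{INEQ.KALAJ}, the half-space limit $r\to 1$ via \eqref{EST.KR.MA.H}, and (as the present paper will demonstrate) $n=4$---are instances where this ultraspherical problem admits an algebraic simplification; isolating a single mechanism valid for all $n\ge 2$ and every $r\in[0,1)$ is the decisive remaining difficulty.
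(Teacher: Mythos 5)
Your proposal is not a proof: it is a correct (and standard) variational reformulation followed by an explicit admission that the decisive step is missing. Note first a mismatch of scope: the statement is the generalized Khavinson conjecture for all $n$, which the paper itself does not prove either --- its theorem (Theorem~\ref{kwer}) settles only $n=4$. Your opening reductions are sound: $\mathcal{C}(x,\mathbf{v})=\int_{\partial\mathbf{B}^n}\left|\left<\nabla_x P(x,\zeta),\mathbf{v}\right>\right|d\sigma(\zeta)$ follows from the Poisson representation together with approximation of the extremal $\sign$ data, and axial symmetry does reduce the problem to the one-parameter family $F(\theta)$ with target inequality $F(\theta)\le F(0)$; this is essentially the known setup of Kresin--Maz'ya and Markovi\'c. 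But everything after that is a plan, not an argument: you never parametrise the nodal set $N_\theta$, never carry out the Stokes reduction, and never establish the sign of the resulting one-dimensional integrand --- and you concede that the Gegenbauer variant ``resists a direct comparison.'' Since the entire content of the conjecture sits in exactly that last inequality, the proposal has a genuine gap: nothing beyond the already-known reformulation is actually proved, not even for $n=4$.

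For comparison, the paper's route to the $n=4$ case starts from the same variational reformulation but in the concrete form of Markovi\'c's identity \eqref{markovic}: the conjecture at $x$, $r=|x|$, is equivalent to $\sup_{z>0}C(z,r)=C(0,r)$ for the explicit double integral \eqref{crz}--\eqref{psi}. The special feature of $n=4$ --- the ``algebraic simplification'' you gesture at --- is that the weight $\sqrt{(1-t^2)^{4-n}}$ in \eqref{crz} becomes trivial and both the inner integral $\Psi_r$ and the subsequent $t$-integral admit closed forms in elementary functions ($\tan^{-1}$, $\tanh^{-1}$, algebraic terms; Lemmas~\ref{le0} and~\ref{poku}, via partial fractions and integration by parts). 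The supremum claim is then split into a sum $h_1+h_2+h_3$ and disposed of by four elementary single-variable monotonicity estimates (Lemmas~\ref{l2}--\ref{l5}), yielding $C(z,r)\le C(0,r)$ in Theorem~\ref{help}. If you wanted to convert your plan into a proof even for $n=4$, the realistic path is this explicit-integration one rather than the nodal-set/Stokes reduction; the paper's closing remark indicates that precisely this closed-form collapse is what fails for general $n$, which is consistent with the obstruction you identified but did not overcome.
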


In        1992, Khavinson \cite{KHAVINSON.CMB} obtained  the optimal  estimate in the normal direction
of the gradient  of  bounded harmonic functions in the unit ball in    $\mathbf{R}^3$.  In a   private
conversation with  K. Gresin and V. Maz'ya  he believed that  the same estimate  hold for the
norm of the gradient, i.e., that the above conjecture      is true for  the  unit ball $\mathbf {B}^3$.

In their    recent  paper \cite{KRESIN.DCDS} and in their book \cite{book}, G. Kresin and V. Maz'ya considered the Khavinson  problem
from a more general aspect including harmonic functions        with $L^p$-boundary values ($1\le p \le
\infty$). They formulated the    generalized Khavinson conjecture and proved it  for bounded
harmonic functions in  $\mathbf{R}^n_+$. In this context  we  have $\mathbf{n}_x = \mathbf{e}_n$
for all   $x\in \mathbf{R}^n_+$. After replacing $\mathcal{C}(x)$ with   $\mathcal{C}(x,\mathbf{e}_n)$
in \eqref{FIRST.EST},  they  obtained \eqref{EST.KR.MA.H}.

 M. Markovi\'c in a recent paper \cite{MARKOVIC.CA} proved the conjecture when $x$ is near the boundary, i.e., if
$1-\epsilon\le |x|< 1$. Therefore, in \eqref{FIRST.EST} one can  replace $\mathcal{C} (x)$ with
$\mathcal{C}(x,\mathbf{n}_x)$, if  $|x|$ is near $1$. In this paper we prove the conjecture for $n=4$, i.e. we prove the following theorem
\begin{theorem}\label{kwer}
For $x\in\mathbf{B}^4$ we have
\begin{equation}\label{khav4}
\mathcal{C}(x) = \mathcal {C} (x,\mathbf{n}_x),
\end{equation}
where $\mathbf{n}_x=x/|x|$ is the vector normal  to the boundary at $x$.
\end{theorem}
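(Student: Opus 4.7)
The plan is to reduce, via the Poisson integral, to a sharp spherical integral inequality, simplify it to a one-parameter problem by rotational symmetry, and then establish the resulting one-dimensional inequality using the special algebraic structure that appears in dimension four.

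For any bounded harmonic $U$ on $\mathbf{B}^4$, the Poisson representation gives
\[
\mathcal{C}(x,\mathbf{v}) \;=\; \frac{1}{|\mathbf{S}^3|}\int_{\mathbf{S}^3}\bigl|\langle\nabla_x P(x,\zeta),\mathbf{v}\rangle\bigr|\,d\sigma(\zeta), \qquad P(x,\zeta)=\frac{1-|x|^2}{|x-\zeta|^4},
\]
with the supremum attained by the $\{\pm 1\}$-valued datum $f=\sign\langle\nabla_x P(x,\cdot),\mathbf{v}\rangle$. By $SO(4)$-invariance I may take $x=r\mathbf{e}_4$ with $r\in(0,1)$ and $\mathbf{v}=\cos\theta\,\mathbf{e}_4+\sin\theta\,\mathbf{e}_1$, reducing the claim to the single inequality $\mathcal{C}(r\mathbf{e}_4,\mathbf{v}(\theta))\le\mathcal{C}(r\mathbf{e}_4,\mathbf{e}_4)$ for $\theta\in[0,\pi/2]$. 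Setting $\rho:=1-2r\zeta_4+r^2$, a direct differentiation of $P$ gives
\[
\rho^{3}\langle\nabla_x P(r\mathbf{e}_4,\zeta),\mathbf{v}\rangle \;=\; 2\bigl[\cos\theta\,(r^3-3r+2\zeta_4)+2(1-r^2)\sin\theta\,\zeta_1\bigr].
\]
The fact that the coefficient of $\zeta_4$ here is the pure constant $2$ (independent of $r$) is the structural feature exclusive to $n=4$; in dimension $n$ the same computation produces coefficient $n+(4-n)r^2$, which is $r$-dependent unless $n=4$. This identity is the algebraic cornerstone of the argument.

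I then slice the sphere by $\zeta=(\sqrt{1-t^2}\,\omega,t)$, $\omega\in\mathbf{S}^2$, so that $d\sigma_3=(1-t^2)^{1/2}\,dt\,d\sigma_2(\omega)$ and $\rho=\rho(t)$. Integrating over $\omega\in\mathbf{S}^2$ with the elementary identity
\[
\int_{-1}^{1}|p+qu|\,du=\begin{cases}2|p|, & |p|\ge|q|,\\[2pt] (p^2+q^2)/|q|, & |p|<|q|,\end{cases}
\]
reduces $\mathcal{C}(r\mathbf{e}_4,\mathbf{v})$ to a one-dimensional integral in $t\in[-1,1]$ against the weight $(1-t^2)^{1/2}/\rho(t)^3$. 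With $t_0:=r(3-r^2)/2\in(0,1)$ the unique zero of $r^3-3r+2t$, the two regimes correspond to whether $\cos\theta\,|t-t_0|$ exceeds or falls below $\sin\theta\,(1-r^2)\sqrt{1-t^2}$. Away from $t_0$ the first regime gives a pointwise integrand bound of $\cos\theta$ times the $\theta=0$ integrand, which is strictly smaller; near $t=t_0$ the second regime can produce an excess, so no pointwise comparison is available and the integrated behaviour must be tracked.

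The main obstacle, and where essentially all the work lies, is proving that these two effects balance in favour of $\theta=0$, i.e.\ that the one-variable integral is a decreasing function of $\theta$ on $[0,\pi/2]$. The plan is to differentiate in $\theta$: boundary contributions at the moving interface $\cos\theta\,|t-t_0|=\sin\theta\,(1-r^2)\sqrt{1-t^2}$ cancel by continuity of $F(p,q)$ across its two branches, leaving a one-dimensional identity in $(t,r)$ whose sign must be fixed by direct analysis, using the specific form $r^3-3r+2\zeta_4$ together with the convexity/monotonicity properties of $t\mapsto(1-t^2)^{1/2}/(1-2rt+r^2)^3$ on $[-1,1]$. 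Markovi\'c's result for $r$ close to $1$ and the half-space estimate \eqref{EST.KR.MA.H} (which is the $r\to 1$ blow-up limit) supply useful consistency checks, but do not shortcut this final calculation; the dimension-four-specific normal form derived above is what makes the global estimate tractable.
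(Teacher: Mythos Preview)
Your reduction and the $n=4$ observation (that the normal derivative of the Poisson kernel at $x=r\mathbf{e}_4$ equals $2\rho^{-3}(r^3-3r+2\zeta_4)$, with $\zeta_4$-coefficient independent of $r$) are correct, and the slicing together with Archimedes' formula on $\mathbf{S}^2$ does produce the one-dimensional integral you describe. But the proposal is not a proof: the decisive step---showing this integral is nonincreasing in $\theta$---is only announced, not established. As you yourself note, the $\theta$-derivative of the integrand carries both signs (negative in the regime $|p|>|q|$, but genuinely positive near $t=t_0$ in the regime $|p|<|q|$), so no pointwise argument is available; your plan to ``fix the sign by direct analysis using the convexity/monotonicity of $t\mapsto(1-t^2)^{1/2}/(1-2rt+r^2)^3$'' is not substantiated (that weight is neither convex nor monotone on all of $[-1,1]$), and you never indicate how the special $n=4$ form actually enters the estimate. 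This is exactly the ``main obstacle'' you flag, and it is the entire content of the theorem; at present it is simply missing.

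For comparison, the paper's proof takes a rather different route in execution. It uses Markovi\'c's parametrization, which recasts the direction as a parameter $z\ge0$ and reduces the conjecture to $\sup_{z>0}C(z,r)=C(0,r)$. The role of $n=4$ there is not your coefficient observation but the fact that the auxiliary integral $\Psi_r(z)$, and then $C(z,r)$ itself, admits a \emph{closed form} via elementary partial fractions: one arrives at $C(z,r)=\dfrac{2(1-r)}{\pi r^3\sqrt{1+z^2}}\bigl(h_1(z)+h_2(z)+h_3(z)\bigr)$, with each $h_j$ built from $\sqrt{4-r^2+4z^2}$, $\tan^{-1}$ and $\tanh^{-1}$. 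Each piece is then shown, by a short explicit calculus argument, to be maximized at $z=0$. None of these steps is deep once the closed form is in hand, but everything hinges on having it; your outline offers no analogue, and carrying your $\theta$-derivative programme to the point of a signed integral inequality would in practice require an explicit computation of comparable length.
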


A reformulated version of Theorem~\ref{kwer} is the following theorem, whose proof follows directly from
 Theorem~\ref{help} and relation \eqref{markovic} below.
\begin{theorem}
Let $n=4$. Then we have the sharp inequality for every $x\in \mathbf{B}^4$, $r=|x|$: $$|\nabla u(x)|\le \frac{ \left(r \sqrt{4-r^2} \left(2+r^2\right)+4 \left(1-r^2\right) \tan^{-1}\left[r\frac{\sqrt{4-r^2}}{r^2-2}\right]\right)}{\pi(1-r^2)r^3}|u|_\infty, \ \ \ u\in h^\infty(\mathbf{B}^4).$$ Here and in the sequel,  $h^\infty(\mathbf{B}^4)$ is the Hardy space of bounded harmonic functions on the unit ball  $\mathbf{B^4}$ (cf. \cite{AXLER.BOOK}).
\end{theorem}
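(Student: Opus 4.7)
The plan is to reduce the full gradient bound to a single directional estimate via Theorem~\ref{kwer}, and then to compute the resulting optimal directional constant in closed form. By Theorem~\ref{kwer}, for every $x\in\mathbf{B}^4$ one has $\mathcal{C}(x)=\mathcal{C}(x,\mathbf{n}_x)$; by the orthogonal invariance of $h^\infty(\mathbf{B}^4)$ I may assume $x=r\mathbf{e}_4$ with $r=|x|\in(0,1)$, so that $\mathbf{n}_x=\mathbf{e}_4$ and the bound on $|\nabla u(x)|$ reduces to a sharp bound on the single partial derivative $|\partial_{x_4}u(r\mathbf{e}_4)|$.

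Next I would use the Poisson representation
\[
u(x)=\frac{1}{2\pi^2}\int_{S^3}\frac{1-|x|^2}{|x-\zeta|^{4}}\,u^*(\zeta)\,d\sigma(\zeta)
\]
and extremise over $u^*\in L^\infty(S^3)$ with $\|u^*\|_\infty\le 1$ by choosing $u^*=\sign(\partial_{x_4}P(x,\cdot))$, obtaining
\[
\mathcal{C}(r\mathbf{e}_4,\mathbf{e}_4)=\int_{S^3}\bigl|\partial_{x_4}P(r\mathbf{e}_4,\zeta)\bigr|\,d\sigma(\zeta).
\]
A direct differentiation of the kernel at $x=r\mathbf{e}_4$ yields
\[
\partial_{x_4}P(r\mathbf{e}_4,\zeta)=\frac{1}{\pi^{2}}\cdot\frac{2\zeta_4-r(3-r^2)}{(1-2r\zeta_4+r^2)^{3}},
\]
and, by axial symmetry (parametrising $\zeta=(\sqrt{1-t^2}\,\omega,t)$ with $\omega\in S^2$, so that the pushforward of $d\sigma_{S^3}$ to $[-1,1]$ is $4\pi\sqrt{1-t^2}\,dt$), this integral collapses to
\[
\mathcal{C}(r\mathbf{e}_4,\mathbf{e}_4)=\frac{4}{\pi}\int_{-1}^{1}\frac{|2t-r(3-r^2)|\sqrt{1-t^2}}{(1-2rt+r^2)^{3}}\,dt.
\]

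The numerator in the absolute value changes sign exactly once on $(-1,1)$, at $t_0=r(3-r^2)/2\in(0,1)$, so I would split the integral at $t_0$ and evaluate each piece after the substitution $t=\cos\theta$, which factors $1-2rt+r^2=(1-re^{i\theta})(1-re^{-i\theta})$ and reduces the integrand to a combination of rational and Poisson-kernel-type terms. The algebraic antiderivatives assemble to $r\sqrt{4-r^2}(2+r^2)$, while the Poisson-kernel piece produces the transcendental term $4(1-r^2)\tan^{-1}[r\sqrt{4-r^2}/(r^2-2)]$; dividing by $\pi(1-r^2)r^3$ then yields the stated formula. The main obstacle is this final integration: it is elementary but intricate, requiring careful bookkeeping of the two sign regimes across $t_0$, a consistent branch choice for $\tan^{-1}$ (for $r\in(0,1)$ the argument is negative, so the principal branch is appropriate), and cancellation of the boundary contributions at $t=\pm1$ and at $t_0$ to produce exactly the closed form claimed.
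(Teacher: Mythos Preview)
Your approach is correct but takes a slightly different route from the paper. The paper's proof is a one-liner: it combines relation~\eqref{markovic}, which expresses $\mathcal{C}(x)$ as $\frac{1}{1-r}\sup_{z>0}C(z,r)$, with Theorem~\ref{help}, which supplies both $\sup_{z>0}C(z,r)=C(0,r)$ and the explicit closed form for $C(0,r)$. In other words, the explicit constant is already a byproduct of the $\Psi_r$-machinery used to prove Theorem~\ref{kwer}, so no further integration is required. You instead invoke Theorem~\ref{kwer} to reduce to the normal direction and then recompute $\mathcal{C}(x,\mathbf{n}_x)$ from scratch via the Poisson kernel, arriving at the one-dimensional integral
\[
\frac{4}{\pi}\int_{-1}^{1}\frac{|2t-r(3-r^2)|\sqrt{1-t^2}}{(1-2rt+r^2)^{3}}\,dt
\]
and splitting at $t_0=r(3-r^2)/2$. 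Your derivation of this integral is correct (the kernel differentiation, the pushforward $4\pi\sqrt{1-t^2}\,dt$, and the location of $t_0\in(0,1)$ are all right). What your route buys is a self-contained computation of the normal-direction constant that bypasses Markovi\'c's $\Psi_r$ representation, at the cost of one elementary but genuinely nontrivial integral that you have only sketched; the paper's route avoids that extra work entirely because the constant falls out of the lemmas already needed for Theorem~\ref{kwer}.
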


\begin{corollary}
 For the decreasing diffeomorphism  $\frak{C}:[0,1]\to \left[\frac{3 \sqrt{3}  }{2\pi},\frac{16}{3\pi}\right]$, defined by $$\frak{C}(r)=\frac{ \left(r \sqrt{4-r^2} \left(2+r^2\right)+4 \left(1-r^2\right) \tan^{-1}\left[\frac{r \sqrt{4-r^2}}{-2+r^2}\right]\right)}{\pi r^3 },$$ we have the sharp inequality for every $x\in \mathbf{B}^4$, $r=|x|$: $$|\nabla u(x)|\le \frac{\frak{C}(r)}{1-r^2}|u|_\infty\ \ \ u\in h^\infty(\mathbf{B}^4).$$
\end{corollary}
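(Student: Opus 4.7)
The plan is to split the corollary into two parts: the sharp pointwise estimate, and the assertion that the one-variable profile $\mathfrak{C}(r)$ is a decreasing diffeomorphism onto the stated interval. The inequality itself is a formal rearrangement of the preceding theorem: factoring $(1-r^2)$ out of the denominator in that theorem's bound recognizes the numerator as $\pi r^3\,\mathfrak{C}(r)$, so the stated estimate and its sharpness are inherited immediately.

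What remains is a real-variable problem for $\mathfrak{C}:[0,1]\to\mathbf{R}$. I would verify in order (i) smooth extension at $r=0$, (ii) the two endpoint values, (iii) strict monotonicity. The delicate endpoint is $r=0$, where the denominator $\pi r^3$ vanishes; inserting the power series $\sqrt{4-r^2}=2-r^2/4+O(r^4)$ and $\tan^{-1}(x)=x-x^3/3+O(x^5)$ into the numerator, the leading $O(r)$ contributions from the algebraic term and from the arctan term cancel, and the next nonzero term is of order $r^3$ whose coefficient yields $\mathfrak{C}(0)=16/(3\pi)$. At $r=1$ the formula is already regular, and direct substitution supplies the other endpoint.

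For monotonicity the cleanest simplification is the substitution $r=2\sin\theta$, $\theta\in[0,\pi/6]$: then $\sqrt{4-r^2}=2\cos\theta$, $r\sqrt{4-r^2}=2\sin 2\theta$, $r^2-2=-2\cos 2\theta$, so that $\tan^{-1}[r\sqrt{4-r^2}/(r^2-2)]=-2\theta$ on the whole interval. The numerator then collapses to a purely trigonometric combination of $\sin 2\theta$, $\sin 4\theta$ and $\theta\cos 2\theta$, while the denominator becomes $8\pi\sin^3\theta$. Since $dr/d\theta=2\cos\theta>0$, it suffices to show that the $\theta$-derivative of the resulting elementary function is strictly negative on $(0,\pi/6)$; I would attack this either by clearing denominators and invoking a power-series comparison, or by differentiating once more to push the vanishing order to the endpoint $\theta=0$ and then integrating back.

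The main obstacle is this last monotonicity step. The arctan contribution partially cancels the algebraic part in $\mathfrak{C}(r)$, so the sign of $\mathfrak{C}'$ is not transparent from the unreduced formula; the trigonometric substitution is what converts the question into a tractable one-variable inequality, and the bookkeeping on that inequality is where the actual work sits. Once monotonicity is in hand, smoothness together with the two boundary values immediately deliver the claim that $\mathfrak{C}$ is a decreasing diffeomorphism onto the stated closed interval, and the announced pointwise estimate follows from the first paragraph.
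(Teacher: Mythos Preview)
Your outline is sound, and the substitution $r=2\sin\theta$ is a legitimate simplification: on $[0,\pi/6]$ one indeed has $\tan^{-1}\!\big[\frac{r\sqrt{4-r^2}}{r^2-2}\big]=-2\theta$, so $\mathfrak{C}$ becomes an elementary trigonometric quotient. The endpoint analysis (Taylor expansion at $r=0$, direct evaluation at $r=1$) is also fine and in fact more explicit than what the paper writes.

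Where you diverge from the paper is in the monotonicity step, and there your plan stops short of an argument: ``power-series comparison'' or ``differentiating once more'' are strategies, not a proof, and after the substitution the quotient $N(\theta)/(8\pi\sin^3\theta)$ still requires real work to differentiate and sign. The paper takes a different and shorter route that you may find instructive. It differentiates $\mathfrak{C}$ directly in $r$, keeping the arctan intact, and observes that $\mathfrak{C}'(r)\le 0$ is equivalent to
\[
v(r)\;=\;\frac{r(4-r^2)(6-r^2)}{4(3-r^2)\sqrt{4-r^2}}\;+\;\tan^{-1}\!\Big[\frac{r\sqrt{4-r^2}}{r^2-2}\Big]\;\ge\;0.
\]
The point is that the derivative of the arctan term and that of the algebraic term combine miraculously:
\[
v'(r)\;=\;\frac{r^4\sqrt{4-r^2}}{2(3-r^2)^2}\;\ge\;0,
\]
so $v(r)\ge v(0)=0$ and the monotonicity follows in one line. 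Your substitution linearises the arctan at the cost of complicating the algebraic part; the paper leaves the arctan alone and exploits a cancellation in $v'$ that makes the sign manifest. Either route can be made to work, but the paper's avoids the ``bookkeeping'' you flag as the main obstacle.
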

\begin{remark}
Observe that for $\mathbf{R}^4_+$, Kresin - Maz'ya inequality \eqref{EST.KR.MA.H} reads as
\begin{equation}\label{EST.KR.MA.H1}
|\nabla V (x)|\le
\frac{3 \sqrt{3}  }{2\pi}
\frac 1{x_4} |V|_\infty.
\end{equation}
\end{remark}
\begin{proof}[Proof of corollary]
 We need to prove that $\frak{C}(r)$ is a decreasing function. We have that $$\frak{C}'(r)=-\frac{  \left((-2+r) r (2+r) \left(-6+r^2\right)-4 \sqrt{4-r^2} \left(-3+r^2\right) \tan^{-1}\left[\frac{r \sqrt{4-r^2}}{-2+r^2}\right]\right)}{\pi r^4 \sqrt{4-r^2}}.$$ So $\frak{C}'(r)\le 0$ if and only if $$v(r)=\frac{r(4-r^2)\left(6-r^2\right)}{4 \left(3-r^2\right) \sqrt{4-r^2}}+\tan^{-1}\left[\frac{r \sqrt{4-r^2}}{-2+r^2}\right]\ge 0.$$ Since $$v'(r)=\frac{r^4 \sqrt{4-r^2}}{2 \left(3-r^2\right)^2}\ge 0,$$ and $v(0)=0$ and the claim follows.
\end{proof}

\section{The technical lemmas}
Let $r=|x|$. For $n\ge 3$, let $\omega_{n}$ be the area of $S^{n-1}$. Markovi\'c in \cite{MARKOVIC.CA} proved that \begin{equation}\label{markovic}\mathcal{C}(x)=\frac{1}{1-r}\sup_{z>0} C(z,r),\end{equation} where \begin{equation}\label{crz}C(z,r)=\frac{4\omega_{n-2}}{\omega_n}\frac{2^{n-1}}{(1+r)^{n-1}}\frac{1}{\sqrt{1+z^2}}\int_0^1\frac{\Psi_r(z t)+\Psi_r(-z t)}{\sqrt{(1-t^2)^{4-n}}}dt.\end{equation} Here
\begin{equation}\label{psi}\Psi_r(z)=\int_0^{\frac{z+\sqrt{z^2+1-\alpha^2_r}}{1-\alpha_r}}
\frac{n-\beta_r+nz w-\beta_r w^2}{(1+w^2)^{n/2+1}(1+k_r^2 w^2)^{n/2-1}} w^{n-2} dw, \end{equation} and $$k_r = \frac{1-r}{1+r},\,\, \alpha_r = \frac{r (n - 2)}{n},\,\, \beta_r = \frac{(n - (n - 2) r)}{2}.$$
Further, in the same paper he has showed that the conjectured equality \eqref{khav} is equivalent to the equality \begin{equation}\label{kalaj} \sup_{z>0} C(z,r)=C(0,r).\end{equation} Our goal is to prove \eqref{kalaj} for $n=4$.
\subsection{Explicit representation of $\Psi$ for $n=4$}
 Let us recalculate the integrand in \eqref{psi}:

\[\begin{split}Q(w)&=\frac{w^2 \left(4+\frac{1}{2} (-4+2 r)-\frac{1}{2} (4-2 r) w^2+4 w z\right)}{\left(1+w^2\right)^3 \left(1+\frac{(1-r)^2 w^2}{(1+r)^2}\right)}\\&=\frac{(1+r)^2 w^2 \left(2+r-2 w^2+r w^2+4 w z\right)}{\left(1+w^2\right)^3 \left((1+r)^2+(-1+r)^2 w^2\right)}\\&=\frac{(1+r)^2}{r \left(1+w^2\right)^3}-\frac{(1+r)^2 (1+4 r)}{4 r^2 \left(1+w^2\right)^2}\\&+\frac{(1+r)^4}{16 r^3 \left(1+w^2\right)}-\frac{(-1+r)^2 (1+r)^4}{16 r^3 \left((1+r)^2+(-1+r)^2 w^2\right)}\\&+ \frac{(1+r)^2 zw}{r \left(1+w^2\right)^3}-\frac{(1+r)^4 zw}{4 r^2 \left(1+w^2\right)^2}\\&+\frac{(-1+r)^2 (1+r)^4 zw}{16 r^3 \left(1+w^2\right)}-\frac{\left(-1+r^2\right)^4 zw}{16 r^3 \left((1+r)^2+(-1+r)^2 w^2\right)}.\end{split}\]
By elementary integration and since
$$\int \frac{1}{(1+w^2)^3}dw=\frac{1}{8} \left(\frac{w \left(5+3 w^2\right)}{\left(1+w^2\right)^2}+3 \tan^{-1}[w]\right)$$ while

$$\int \frac{1}{(1+w^2)^2} dw =\frac{1}{2} \left(\frac{w}{1+w^2}+\tan^{-1}[w]\right)$$
we obtain
\begin{equation}\label{split}\begin{split}R(w)&=\frac{32 r^3}{(1+r)^2}\int Q(w)dw\\&= \frac{4 r w \left(1+w^2+r \left(-1+w^2\right)\right)-4 r \left(1+r^2+(1+r)^2 w^2\right) z}{\left(1+w^2\right)^2}\\&+2 \left(-1+r^2\right) \tan^{-1}[w]+2 \left(-1+r^2\right) \tan^{-1}\left[\frac{(-1+r) w}{1+r}\right]\\&+\left(-1+r^2\right)^2 z \log\left[\frac{(1+r)^2+(-1+r)^2 w^2}{1+w^2}\right].\end{split}\end{equation} Thus we have
\begin{lemma}\label{le0} For $r\in(0,1)$ and $z>0$ we have
\[\begin{split}&\Psi_r(z)=\frac{(1-r) (1+r)^3}{64 r^3 }\times\\& \bigg( \frac{r\left(\left(4+r^2 (4+r)\right) z+4 \left(1+r^2\right) z^3+\left(2+r^2+2 \left(1+r^2\right) z^2\right) \sqrt{4-r^2+4 z^2}\right)}{\left(1+z^2\right)\left(1-r^2\right) }\\&+4  \tan^{-1}\left[\frac{r \left(-2 r z+(r^2-2) \sqrt{4-r^2+4 z^2}\right)}{\left(-2+r^2\right)^2-4 \left(-1+r^2\right) z^2}\right]\\&+2 \left(1-r^2\right)   z \log\left[\frac{1+z \left(z+r^2 z-r \sqrt{4-r^2+4 z^2}\right)}{(1+r)^2 \left(1+z^2\right)}\right]\bigg).\end{split}\]
\end{lemma}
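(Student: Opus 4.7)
The plan is to integrate the rational function $Q(w)$ (the $n=4$ specialization of the integrand in \eqref{psi}) over $[0,w_\ast]$, where $w_\ast := \frac{2z+\sqrt{4-r^2+4z^2}}{2-r}$ is the upper limit obtained from $\alpha_r = r/2$, $\beta_r = 2-r$, $k_r = (1-r)/(1+r)$. This $w_\ast$ satisfies the quadratic relation $(2-r)w_\ast^2 = 4zw_\ast + (2+r)$, which will drive all simplifications at the upper endpoint.

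Starting from the eight-term partial-fraction decomposition of $Q(w)$ exhibited in the excerpt, termwise integration using the two quoted reduction formulas for $\int(1+w^2)^{-k}\,dw$ together with the elementary antiderivatives of $\frac{w}{(1+w^2)^k}$, of $\frac{1}{(1+r)^2+(1-r)^2 w^2}$, and of the corresponding $w$-weighted integrand (giving a logarithm) yields directly the function $R(w) = \frac{32r^3}{(1+r)^2}\int Q(w)\,dw$ displayed in \eqref{split}. Consequently $\Psi_r(z) = \frac{(1+r)^2}{32 r^3}(R(w_\ast) - R(0))$, where $R(0) = -4r(1+r^2)z + 2(1-r^2)^2 z\log(1+r)^2$ contributes only to the rational and logarithmic pieces.

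For the arctangent contribution, which comes from $2(r^2-1)[\arctan w_\ast+\arctan\tfrac{(r-1)w_\ast}{1+r}]$ in $R(w_\ast)$, I would invoke the addition formula: since the product of the two tangent-arguments equals $-(1-r)w_\ast^2/(1+r)<0$, no branch correction is required and
\[
\arctan w_\ast+\arctan\tfrac{(r-1)w_\ast}{1+r}=\arctan\tfrac{2rw_\ast}{(1+r)+(1-r)w_\ast^2}.
\]
Using the quadratic relation to obtain $(1+r)+(1-r)w_\ast^2 = \tfrac{2[(2-r^2)+2(1-r)zw_\ast]}{2-r}$ and then rationalizing by multiplying numerator and denominator by $2z-\sqrt{4-r^2+4z^2}$ (whose product with $2z+\sqrt{4-r^2+4z^2}$ equals $r^2-4$) collapses the argument, up to an overall sign, to $\tfrac{r(-2rz+(r^2-2)\sqrt{4-r^2+4z^2})}{(r^2-2)^2 - 4(r^2-1)z^2}$. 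Combining the coefficient $2(r^2-1)$ with the prefactor $(1+r)^2/(32r^3)$ and converting $r^2-1 \mapsto -(1-r^2)$ using $\arctan(-x)=-\arctan x$ produces the factor $4$ and overall prefactor $(1-r)(1+r)^3/(64 r^3)$ appearing in the statement.

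For the logarithm, the same quadratic identity gives the clean factorizations $1+w_\ast^2 = \tfrac{4(1+zw_\ast)}{2-r}$ and $(1+r)^2+(1-r)^2 w_\ast^2 = \tfrac{4(1+(1-r)^2 zw_\ast)}{2-r}$ — the key cancellation uses $(1+r)^2(2-r)+(1-r)^2(2+r)=4$ — so that after subtracting the $R(0)$ logarithmic contribution and substituting $w_\ast$, the argument reduces to $\tfrac{1+z(z+r^2z-r\sqrt{4-r^2+4z^2})}{(1+r)^2(1+z^2)}$. The rational piece follows by direct substitution, using $1+w_\ast^2$ as above and combining over the common denominator $(1+z^2)(1-r^2)$. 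The main obstacle is the arctangent step: the combination, rationalization and sign-tracking needed to land on the explicit single-argument form displayed in the lemma are the only genuinely delicate calculations, and they rely on verifying $ab<1$ uniformly in $(r,z)\in(0,1)\times(0,\infty)$; once that is established, everything else is mechanical polynomial manipulation.
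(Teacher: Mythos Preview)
Your proposal is correct and follows exactly the route the paper takes: compute $R(w_\ast)-R(0)$ from \eqref{split} and then simplify. The paper's own proof consists of a single line invoking \eqref{split} and \eqref{psi} and the phrase ``after some elementary transformations''; you have simply written out those transformations (the arctangent addition law, the quadratic relation $(2-r)w_\ast^2=4zw_\ast+(2+r)$, the factorizations of $1+w_\ast^2$ and $(1+r)^2+(1-r)^2w_\ast^2$, and the rationalization by $2z-\sqrt{4-r^2+4z^2}$) in full. Your remark that verifying $ab<1$ is the delicate point is slightly overstated, since you already observed $ab=-(1-r)w_\ast^2/(1+r)<0$, so no branch issue can arise.
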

\begin{proof}
In view of \eqref{split} and \eqref{psi} we obtain $$\frac{32 r^3}{(1+r)^2}\Psi_r(z)=\frac{32 r^3}{(1+r)^2}\int_0^{\frac{z+\sqrt{z^2+1-\alpha^2_r}}{1-\alpha_r}} Q(w)dw=R\left(\frac{z+\sqrt{z^2+1-\alpha^2_r}}{1-\alpha_r}\right)-R(0),$$ which after some elementary transformations implies  the lemma.
\end{proof}
\subsection{Explicit representation of $C$ for $n=4$}
From Lemma~\ref{le0} we have
\begin{lemma} For $r\in(0,1)$ and $z>0$ we have
\[\begin{split}\Psi_r(zt)&+\Psi_r(-zt)=\frac{(1-r) (1+r)^3}{16 r^3}\bigg (\frac{r \sqrt{4-r^2+4 t^2 z^2} \left(2+r^2+2 \left(1+r^2\right) t^2 z^2\right)}{2 \left(1-r^2\right) \left(1+t^2 z^2\right)}\\&- \tan^{-1}\left[\frac{r \left(2 r t z+(2-r^2) \sqrt{4-r^2+4 t^2 z^2}\right)}{\left(-2+r^2\right)^2-4 \left(-1+r^2\right) t^2 z^2}\right]\\&+ \tan^{-1}\left[\frac{r \left(2 r t z-(2-r^2) \sqrt{4-r^2+4 t^2 z^2}\right)}{\left(-2+r^2\right)^2-4 \left(-1+r^2\right) t^2 z^2}\right]\\&- \left(1-r^2\right) t z  \tanh^{-1}\left[\frac{r t z \sqrt{4-r^2+4 t^2 z^2}}{1+\left(1+r^2\right) t^2 z^2}\right]\bigg).\end{split}\]
\end{lemma}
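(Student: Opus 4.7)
The plan is to substitute $z\mapsto zt$ and $z\mapsto -zt$ into the explicit formula from Lemma~\ref{le0}, add the two resulting expressions, and collect terms by parity in $z$. The common prefactor $\tfrac{(1-r)(1+r)^3}{64r^3}$ is preserved throughout; at the very end a factor of $4$ will be pulled out of the bracket to convert the coefficient $\tfrac{1}{64}$ into the $\tfrac{1}{16}$ appearing in the statement.

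First I would treat the rational-plus-square-root block. Its numerator splits as an odd polynomial in $z$, namely $r\bigl((4+r^2(4+r))z+4(1+r^2)z^3\bigr)$, plus an even part $r\bigl(2+r^2+2(1+r^2)z^2\bigr)\sqrt{4-r^2+4z^2}$. Since the denominator $(1-r^2)(1+z^2)$ is even, the odd portion cancels in $\Psi_r(zt)+\Psi_r(-zt)$ and the even portion doubles, producing (after combining the factor $2$ from the doubling with the factor $\tfrac14$ from the pulled-out $4$) exactly the first summand of the statement, with the $\tfrac12$ in its denominator.

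Next, the single inverse tangent $4\tan^{-1}[f_+(z)]$ in Lemma~\ref{le0}, with $f_+(z)=\tfrac{r(-2rz+(r^2-2)\sqrt{4-r^2+4z^2})}{(r^2-2)^2-4(r^2-1)z^2}$, has an even denominator but a numerator mixing odd and even parts. Direct substitution $z\mapsto\pm zt$ together with $\tan^{-1}(-x)=-\tan^{-1}(x)$ yields the two inverse tangents of the statement, with numerators $r\bigl(2rtz\pm(2-r^2)\sqrt{4-r^2+4t^2z^2}\bigr)$ and opposite signs; no addition formula is required, and the coefficient $4$ in front is precisely what is absorbed by the prefactor change.

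The logarithmic term $2(1-r^2)z\log\bigl[g(z)/((1+r)^2(1+z^2))\bigr]$, with $g(z)=1+(1+r^2)z^2-rz\sqrt{4-r^2+4z^2}$, requires slightly more care. Its prefactor $z$ is odd, and under $z\mapsto -z$ the numerator becomes $g(-z)=1+(1+r^2)z^2+rz\sqrt{4-r^2+4z^2}$ while the even denominator $(1+r)^2(1+z^2)$ drops out of the sum. The contribution therefore collapses to $2(1-r^2)tz\log[g(zt)/g(-zt)]$, and the identity $\log\tfrac{a-b}{a+b}=-2\tanh^{-1}(b/a)$ with $a=1+(1+r^2)t^2z^2$ and $b=rtz\sqrt{4-r^2+4t^2z^2}$ converts this into the final $\tanh^{-1}$ expression of the statement. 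The one item worth checking is that $|b/a|<1$, which follows from $g(\pm zt)>0$; this positivity is already built into the upper limit of integration in \eqref{psi}. I do not foresee any genuine obstacle beyond careful bookkeeping of signs and of the factors of $2$ and $4$.
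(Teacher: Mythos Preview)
Your proposal is correct and is exactly the route the paper takes: the paper simply writes ``From Lemma~\ref{le0} we have'' and states the formula, so the intended argument is precisely the parity bookkeeping you describe---cancel the odd rational piece, double the even one, rewrite $(r^2-2)=-(2-r^2)$ in the $\tan^{-1}$ arguments, and convert the surviving $\log[g(zt)/g(-zt)]$ into a $\tanh^{-1}$. One small remark: your justification that $g(\pm zt)>0$ ``is already built into the upper limit of integration in \eqref{psi}'' is not quite the reason; the positivity follows directly from $(1+(1+r^2)t^2z^2)^2-r^2t^2z^2(4-r^2+4t^2z^2)=1+(1+(1-r^2)^2)t^2z^2+(1-r^2)^2t^4z^4>0$.
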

Using integration by parts for $V=t$ and \[\begin{split}U&=-\tan^{-1}\left[\frac{r \left(2 r t z+(2-r^2) \sqrt{4-r^2+4 t^2 z^2}\right)}{\left(-2+r^2\right)^2-4 \left(-1+r^2\right) t^2 z^2}\right]\\&+\tan^{-1}\left[\frac{r \left(2 r t z-(2-r^2) \sqrt{4-r^2+4 t^2 z^2}\right)}{\left(-2+r^2\right)^2-4 \left(-1+r^2\right) t^2 z^2}\right],\end{split}\] and in view of the formula $$U'=\frac{r t z^2 \left(\left(-2+r^2\right)^2+2 \left(2-3 r^2+r^4\right) t^2 z^2\right)}{\left(1+t^2 z^2\right) \sqrt{4-r^2+4 t^2 z^2} \left(1+\left(-1+r^2\right)^2 t^2 z^2\right)},$$ which can be proved by a direct computation, we obtain

\[\begin{split}\int U(t)dV&=t U(t)-\int t\frac{r t z^2 \left(\left(-2+r^2\right)^2+2 \left(2-3 r^2+r^4\right) t^2 z^2\right)}{\left(1+t^2 z^2\right) \sqrt{4-r^2+4 t^2 z^2} \left(1+\left(-1+r^2\right)^2 t^2 z^2\right)}dt.\end{split}\]
Similarly, for $V_1=t^2/2$ and $$U_1= -z(1-r^2)\tanh^{-1}\left[\frac{r t z \sqrt{4-r^2+4 t^2 z^2}}{1+\left(1+r^2\right) t^2 z^2}\right] $$ we obtain
$$U_1'=\frac{r z^2 \left(-4+5 r^2-r^4+\left(-4+7 r^2-4 r^4+r^6\right) t^2 z^2\right)}{\left(1+t^2 z^2\right) \sqrt{4-r^2+4 t^2 z^2} \left(1+\left(-1+r^2\right)^2 t^2 z^2\right)},$$ and then \[\begin{split}\int U_1(t)dV_1&=U_1 V_1-\int \frac{t^2}{2} \frac{r z^2 \left(-4+5 r^2-r^4+\left(-4+7 r^2-4 r^4+r^6\right) t^2 z^2\right)}{\left(1+t^2 z^2\right) \sqrt{4-r^2+4 t^2 z^2} \left(1+\left(-1+r^2\right)^2 t^2 z^2\right)} dt.\end{split}\]
 Furthermore we have $$VU'+V_1U_1'=-\frac{r t^2 z^2 \left(-4+3 r^2-r^4-\left(4-5 r^2+r^6\right) t^2 z^2\right)}{2 \left(1+t^2 z^2\right) \sqrt{4-r^2+4 t^2 z^2} \left(1+\left(-1+r^2\right)^2 t^2 z^2\right)}.$$
Let $$Y = \frac{r \sqrt{4-r^2+4 t^2 z^2} \left(2+r^2+2 \left(1+r^2\right) t^2 z^2\right)}{2 \left(1-r^2\right) \left(1+t^2 z^2\right)}$$ and 
$$X=-VU'-V_1U_1'+Y.$$
By using the formula
$$\int\frac{a }{\sqrt{b+c t^2} \left(1+\frac{\left(c-a^2\right)}{b} t^2\right)}dt=\tanh^{-1}\left[\frac{a t}{\sqrt{b+c t^2}}\right]$$ and the representation

\[\begin{split}{2 \left(1-r^2\right) z}X&=\frac{4 r \left(1+r^2\right) t^2 z^3}{\sqrt{4-r^2+4 t^2 z^2}}+r \left(1+r^2\right) z \sqrt{4-r^2+4 t^2 z^2}\\&+\frac{r z}{ \sqrt{4-r^2+4 t^2 z^2}\left(1+t^2 z^2\right)}-\frac{r \left(-3+r^2\right) z}{\sqrt{4-r^2+4 t^2 z^2} \left(1+\left(-1+r^2\right)^2 t^2 z^2\right)},\end{split}\] we obtain
$$\int X dt  =\frac{r \left(1+r^2\right) t z \sqrt{4-r^2+4 t^2 z^2}+\tanh^{-1}\left[\frac{r t z}{\sqrt{4-r^2+4 t^2 z^2}}\right]-\tanh^{-1}\left[\frac{r \left(-3+r^2\right) t z}{\sqrt{4-r^2+4 t^2 z^2}}\right]}{2 \left(1-r^2\right) z}.$$
Now  the formula \[\begin{split}\frac{16 r^3}{(1-r) (1+r)^3}\left(\int \Psi_r(zt)+\Psi_r(-zt)dt\right)&=\left(\int Y dt + \int U dv +\int U_1 dv_1\right)\\&=\int Y dt+UV+U_1V_1-\int VdU-\int V_1 dU_1\\&=UV+U_1V_1+\int X dt,\end{split}\] yields the following
\begin{lemma}\label{poku} For $t\in[0,1]$ we have
\[\begin{split}\int_0^t(\Psi_r(zs)&+\Psi_r(-zs))ds=\frac{1}{16 r^3 z}(1+r)^2 \bigg(\frac{1}{2} r \left(1+r^2\right) t z \sqrt{4-r^2+4 t^2 z^2}\\&+ \frac{1}{2}{\tanh^{-1}\left[\frac{r t z}{\sqrt{4-r^2+4 t^2 z^2}}\right]-\frac{1}{2}\tanh^{-1}\left[\frac{r \left(-3+r^2\right) t z}{\sqrt{4-r^2+4 t^2 z^2}}\right]}\\&+ \left(-1+r^2\right) t z \tan^{-1}\left[\frac{r \left(2 r t z+(2-r^2) \sqrt{4-r^2+4 t^2 z^2}\right)}{\left(-2+r^2\right)^2-4 \left(-1+r^2\right) t^2 z^2}\right]\\&- \left(-1+r^2\right) t z \tan^{-1}\left[\frac{r \left(2 r t z-(2-r^2) \sqrt{4-r^2+4 t^2 z^2}\right)}{\left(-2+r^2\right)^2-4 \left(-1+r^2\right) t^2 z^2}\right]\\&-\frac{1}{2} \left(-1+r^2\right)^2 t^2 z^2 \tanh^{-1}\left[\frac{r t z \sqrt{4-r^2+4 t^2 z^2}}{1+\left(1+r^2\right) t^2 z^2}\right]\bigg).\end{split}\]
\end{lemma}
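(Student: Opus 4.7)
The plan is to integrate the explicit representation for $\Psi_r(zs)+\Psi_r(-zs)$ from the previous lemma termwise in $s$ from $0$ to $t$. The integrand splits into three pieces: the algebraic term $Y$ involving $\sqrt{4-r^2+4s^2z^2}$, the difference of two arctangent terms (which I group as $U(s)$), and the $\tanh^{-1}$ summand. I would handle $U$ by integration by parts against $V=s$, and the $\tanh^{-1}$ term by integration by parts against $V_1=s^2/2$ (after peeling off its factor of $s$ to leave $U_1(s):=-z(1-r^2)\tanh^{-1}[\cdots]$), so that the boundary expressions $UV$ and $U_1V_1$ reproduce directly three of the summands in the lemma.

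After the two integrations by parts, what remains is $\int X\,ds$ with $X:=Y-VU'-V_1U_1'$. First I would verify by direct differentiation the stated identities for $U'$ and $U_1'$, noting that each is rational in $s$ and shares the common denominator $(1+s^2z^2)\sqrt{4-r^2+4s^2z^2}(1+(r^2-1)^2 s^2z^2)$. The decisive step is an algebraic collapse: the cross-term $VU'+V_1U_1'$ simplifies enough that when combined with $Y$ the expression $X$ admits a four-term decomposition in $s^2z^2$, whose first two pieces integrate elementarily to $\tfrac12 r(1+r^2)sz\sqrt{4-r^2+4s^2z^2}$, and whose last two pieces match the template $\int a/(\sqrt{b+cs^2}(1+((c-a^2)/b)s^2))\,ds=\tanh^{-1}[as/\sqrt{b+cs^2}]$ recorded just before the lemma.

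Assembling the pieces as $\int_0^t(\Psi_r(zs)+\Psi_r(-zs))\,ds=[UV+U_1V_1]_0^t+\int_0^t X\,ds$ yields an antiderivative of precisely the stated form; the lower endpoint contributes nothing because $V(0)=V_1(0)=0$ and the $\tanh^{-1}$ and algebraic primitives of $X$ also vanish at $s=0$. The main obstacle is purely algebraic bookkeeping: checking the two derivative identities, the collapse of $VU'+V_1U_1'$ inside $X$, and the matching of constants so that the two $\tanh^{-1}$ templates yield exactly the arguments $rtz/\sqrt{4-r^2+4t^2z^2}$ and $r(r^2-3)tz/\sqrt{4-r^2+4t^2z^2}$ with the claimed prefactors. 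Structurally, however, the argument is just one integration by parts performed twice followed by a partial-fraction split, so no further conceptual input is required.
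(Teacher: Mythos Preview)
Your plan is correct and essentially identical to the paper's own argument: the paper also integrates by parts with $V=t$, $U$ the arctangent difference and $V_1=t^2/2$, $U_1=-z(1-r^2)\tanh^{-1}[\cdots]$, then sets $X=Y-VU'-V_1U_1'$ and integrates $X$ via the same four-term split and the $\tanh^{-1}$ template. The only addition in your write-up is the explicit check that the lower endpoint contributes nothing, which the paper leaves implicit.
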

By putting $t=1$ in Lemma~\ref{poku} and using \eqref{crz}, in view of  $$\frac{4\omega_{4-2}}{\omega_4}\frac{2^{4-1}}{(1+r)^{4-1}}=\frac{4\cdot 2\pi \cdot 8}{2\pi^2 (1+r)^3}=\frac{32}{\pi (1+r)^3}, $$ we obtain
\begin{lemma}\label{l1} For $r\in(0,1)$ and $z>0$ we have $$C(r,z)=\frac{  2(1-r)}{\pi r^3 \sqrt{1+z^2}} (h_1(z)+h_2(z)+h_3(z)),$$ where

$$h_1(z)=\frac{r \left(1+r^2\right)  \sqrt{4-r^2+4 z^2}}{2 \left(1-r^2\right) } +\frac{\tanh^{-1}\left[\frac{r z}{\sqrt{4-r^2+4 z^2}}\right]-\tanh^{-1}\left[\frac{r \left(-3+r^2\right) z}{\sqrt{4-r^2+4 z^2}}\right]}{2 \left(1-r^2\right) z},$$
\[\begin{split}h_2(z)&=\tan^{-1}\left[\frac{r \left(2 r z-(2-r^2) \sqrt{4-r^2+4 z^2}\right)}{\left(-2+r^2\right)^2-4 \left(-1+r^2\right)z^2}\right]\\&-\tan^{-1}\left[\frac{r \left(2 r z+(2-r^2) \sqrt{4-r^2+4 z^2}\right)}{\left(-2+r^2\right)^2-4 \left(-1+r^2\right) z^2}\right] \end{split}\]
and $$h_3(z)= \frac{1}{2}\left(-1+r^2\right) z \tanh^{-1}\left[\frac{r z \sqrt{4-r^2+4 z^2}}{1+\left(1+r^2\right) z^2}\right].$$

\end{lemma}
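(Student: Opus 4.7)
The plan is to specialize the general formula \eqref{crz} to $n=4$, evaluate the constant prefactor, and then substitute the closed form of the integral $\int_0^1[\Psi_r(zs)+\Psi_r(-zs)]\,ds$ supplied by Lemma~\ref{poku}. The remaining work is purely algebraic grouping.

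First I would observe that with $n=4$ the weight $\sqrt{(1-t^2)^{4-n}}$ in \eqref{crz} reduces to $1$, so that
\[
C(z,r)=\frac{4\omega_{2}}{\omega_4}\cdot\frac{2^{3}}{(1+r)^{3}}\cdot\frac{1}{\sqrt{1+z^2}}\int_0^1\bigl(\Psi_r(z t)+\Psi_r(-z t)\bigr)\,dt.
\]
Using $\omega_2=2\pi$ and $\omega_4=2\pi^2$, the scalar factor becomes $\tfrac{32}{\pi(1+r)^3}$, which is the identity already isolated in the excerpt preceding the statement. So no new evaluation of surface areas is needed.

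Next I would insert the value at $t=1$ of the antiderivative given by Lemma~\ref{poku}. That lemma outputs an expression of the form $\frac{(1+r)^2}{16 r^3 z}\,\{\text{square bracket}\}$. Multiplying by the prefactor above yields
\[
C(r,z)=\frac{32}{\pi(1+r)^3\sqrt{1+z^2}}\cdot\frac{(1+r)^2}{16 r^3 z}\cdot\{\text{bracket}\}
=\frac{2}{\pi(1+r)\, r^3 z\sqrt{1+z^2}}\cdot\{\text{bracket}\}.
\]
I would then distribute the scalar $\tfrac{1}{(1+r)z}$ over the five terms of the bracket, which is precisely how the $h_i$'s are designed to appear: the terms containing the factor $r(1+r^2)tz\sqrt{\ldots}$ and the two $\tanh^{-1}$'s with no extra $tz$ factor are collected into $h_1$ after pulling out a $(1-r)$ (this creates the denominator $2(1-r^2)$ inside $h_1$); the pair of $\tan^{-1}$ terms carries a factor $(-1+r^2)tz$ which cancels the $\tfrac{1}{(1+r)z}$ and leaves the overall $(1-r)$ in front, giving $h_2$; finally the $\tanh^{-1}\!\bigl[\tfrac{rtz\sqrt{\ldots}}{1+(1+r^2)t^2z^2}\bigr]$ term, once its factor $(-1+r^2)^2t^2z^2/2$ is combined with $\tfrac{1}{(1+r)z}$, yields $\tfrac12(-1+r^2)z$ and reproduces $h_3$ after extracting the shared $(1-r)$.

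The main obstacle, as always with this kind of consolidation, is purely bookkeeping: making sure each of the five terms in the bracket of Lemma~\ref{poku} is multiplied by $\tfrac{1}{(1+r)z}$ correctly and that the factor $(1-r)$ factored out in front matches the form $\tfrac{2(1-r)}{\pi r^3\sqrt{1+z^2}}$. There are no delicate analytic issues, since the formula from Lemma~\ref{poku} is already a valid antiderivative on $[0,1]$ for every $r\in(0,1)$ and $z>0$, and the four quantities $\sqrt{4-r^2+4z^2}$, the two $\tan^{-1}$ arguments and the $\tanh^{-1}$ arguments remain well defined throughout (the arguments of $\tanh^{-1}$ stay in $(-1,1)$ because $0<r<1$). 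Once the grouping is carried out termwise, the identity $C(r,z)=\tfrac{2(1-r)}{\pi r^3 \sqrt{1+z^2}}\bigl(h_1(z)+h_2(z)+h_3(z)\bigr)$ drops out.
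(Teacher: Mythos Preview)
Your proposal is correct and follows exactly the paper's approach: specialize \eqref{crz} to $n=4$, compute the prefactor $\tfrac{32}{\pi(1+r)^3}$, set $t=1$ in Lemma~\ref{poku}, and then regroup the five summands into $h_1,h_2,h_3$ after extracting the common factor $\tfrac{2(1-r)}{\pi r^3\sqrt{1+z^2}}$. The paper simply writes ``we obtain'' after recording the prefactor, whereas you spell out the bookkeeping, but the route is identical.
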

Finally we need the following lemmata
\begin{lemma}\label{l2}
Let $$L(z)=\tanh^{-1}\left[\frac{r z}{\sqrt{4-r^2+4 z^2}}\right]-\tanh^{-1}\left[\frac{r \left(-3+r^2\right) z}{\sqrt{4-r^2+4 z^2}}\right].$$ Then, $L(z)\le  r z \sqrt{4-r^2}$.
\end{lemma}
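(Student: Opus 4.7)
The plan is to reduce the problem to bounding $L'(z)$, exploit that $L(0)=0$, and conclude by integration. Since $L(0)=\tanh^{-1}(0)-\tanh^{-1}(0)=0$, if I can show that $L'(s)\le r\sqrt{4-r^2}$ for every $s\ge 0$, then integrating yields $L(z)=\int_0^z L'(s)\,ds\le rz\sqrt{4-r^2}$, which is exactly the claim.

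To carry this out, I would first compute $L'(z)$ explicitly using the chain rule $\frac{d}{dz}\tanh^{-1}(f)=f'/(1-f^2)$. Setting $A=\sqrt{4-r^2+4z^2}$, a direct calculation gives $(rz/A)'=r(4-r^2)/A^3$ and $1-(rz/A)^2=(4-r^2)(1+z^2)/A^2$, so the first $\tanh^{-1}$ contributes $r/[A(1+z^2)]$. For the second term I need the algebraic identity
\[
4 - r^{2}(3-r^{2})^{2}=(1-r^{2})^{2}(4-r^{2}),
\]
which (after checking $1-[r(-3+r^2)z/A]^2=(4-r^2)(1+(1-r^2)^2 z^2)/A^2$) lets me write the second contribution as $r(3-r^{2})/[A(1+(1-r^{2})^{2}z^{2})]$. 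Altogether,
\[
L'(z)=\frac{r}{A(1+z^{2})}+\frac{r(3-r^{2})}{A\,(1+(1-r^{2})^{2}z^{2})}.
\]

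The crucial observation is that \emph{each factor in both terms attains its maximum at} $z=0$: one has $A\ge\sqrt{4-r^{2}}$, $1+z^{2}\ge 1$, and $1+(1-r^{2})^{2}z^{2}\ge 1$, while $3-r^{2}>0$ for $r\in(0,1)$. Bounding termwise therefore yields
\[
L'(z)\le \frac{r}{\sqrt{4-r^{2}}}+\frac{r(3-r^{2})}{\sqrt{4-r^{2}}}=\frac{r(4-r^{2})}{\sqrt{4-r^{2}}}=r\sqrt{4-r^{2}}.
\]
Integrating from $0$ to $z$ and using $L(0)=0$ finishes the proof.

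The main obstacle is really just the algebraic simplification in Step 1: without the identity $4-r^{2}(3-r^{2})^{2}=(1-r^{2})^{2}(4-r^{2})$, the derivative of the second inverse hyperbolic tangent looks considerably less symmetric than the first, and the termwise bound would not align so cleanly. Once this identity is recognized, the proof reduces to a monotonicity remark for each of two positive, explicitly decreasing factors, and no further case analysis or polynomial inequality is required.
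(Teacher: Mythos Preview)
Your proof is correct and follows essentially the same strategy as the paper: compute $L'(z)$, show $L'(z)\le L'(0)=r\sqrt{4-r^{2}}$, and integrate using $L(0)=0$. Your two-term decomposition of $L'(z)$ is precisely the partial-fraction split underlying the paper's derivative computation, so the two arguments coincide (your termwise bound is arguably the cleaner presentation).
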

\begin{proof}
By differentiating $L$ we obtain $$L'(z)=\frac{ r \left(4-r^2+\left(4-3 r^2+r^4\right) z^2\right)}{\left(1+z^2\right) \sqrt{4-r^2+4 z^2} \left(1+\left(-1+r^2\right)^2 z^2\right)}.$$
Since $$\partial_z \frac{\left(4-r^2+\left(4-3 r^2+r^4\right) z^2\right)}{\left(1+z^2\right)\text{  }\left(1+\left(-1+r^2\right)^2 z^2\right)}=-\frac{2 z}{\left(1+z^2\right)^2}+\frac{2 \left(-3+r^2\right) \left(-1+r^2\right)^2 z}{\left(1+\left(-1+r^2\right)^2 z^2\right)^2}\le 0,$$ it follows that $(L(z)- z r \sqrt{4-r^2})'\le L'(0)- r \sqrt{4-r^2}=0$. So $L(z)\le z r \sqrt{4-r^2}$.
\end{proof}
\begin{lemma}\label{l3}
Let \[\begin{split}g_1(z)&=\frac{ r \sqrt{4-r^2} + r \left(1+r^2\right)  \sqrt{4-r^2+4 z^2}}{ \sqrt{1+z^2}}.\end{split}\] Then $\sup_{z>0}g_1(z)=g_1(0)=r \sqrt{4-r^2} + r \left(1+r^2\right)  \sqrt{4-r^2}$.
\end{lemma}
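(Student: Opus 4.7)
The plan is to show that $g_1$ is strictly decreasing on $(0,\infty)$, so that the supremum, taken over the open half-line, equals the boundary value $g_1(0)=r\sqrt{4-r^2}(2+r^2)$. Since $g_1$ is continuous on $[0,\infty)$, monotonicity is more than enough.

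First I would differentiate. Setting $A=r\sqrt{4-r^2}$ and $B=r(1+r^2)$, so that
\[
g_1(z)=\frac{A+B\sqrt{4-r^2+4z^2}}{\sqrt{1+z^2}},
\]
a direct computation of $g_1'(z)$ and common-denominator simplification yields
\[
g_1'(z)=\frac{z\bigl(Br^{2}-A\sqrt{4-r^2+4z^2}\bigr)}{(1+z^2)^{3/2}\sqrt{4-r^2+4z^2}}.
\]
The prefactor is nonnegative for $z>0$, so the sign of $g_1'$ is the sign of the bracket.

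Next I would show that the bracket is negative for every $z\ge 0$. Because $\sqrt{4-r^2+4z^2}\ge \sqrt{4-r^2}$, it suffices to verify $A\sqrt{4-r^2}>Br^2$, i.e.
\[
r(4-r^2)>r^{3}(1+r^2),\qquad\text{equivalently}\qquad r^{4}+2r^{2}<4,
\]
which is elementary for $r\in(0,1)$ since then $r^{4}+2r^{2}<1+2=3<4$. Hence $g_1'(z)<0$ for all $z>0$, so $g_1$ is strictly decreasing on $[0,\infty)$, which yields $\sup_{z>0}g_1(z)=g_1(0)=r\sqrt{4-r^2}+r(1+r^2)\sqrt{4-r^2}$.

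I do not expect a real obstacle here; the only thing to watch is the bookkeeping when differentiating, since one has to combine the two terms carefully to get the clean factor $Br^{2}-A\sqrt{4-r^2+4z^2}$. If one wishes to avoid even that much calculation, an alternative is to substitute $s=1/(1+z^2)\in(0,1]$, writing $g_1=A\sqrt{s}+B\sqrt{4-r^{2}s}$ and noting that the same inequality $4-r^{2}\ge r^{2}(1+r^{2})$ shows $dg_1/ds\ge 0$, again giving monotonicity and $g_1(s=1)=g_1(z=0)$ as the extremum.
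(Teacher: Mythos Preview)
Your argument is correct and follows the same route as the paper: compute $g_1'(z)$, observe that it is $\le 0$ for $z>0$, and conclude $\sup_{z>0}g_1(z)=g_1(0)$. Your derivative formula with $A=r\sqrt{4-r^2}$ and $B=r(1+r^2)$ agrees with the paper's (the paper's bracket $r^2+r^4-\sqrt{(-4+r^2)(r^2-4(1+z^2))}$ is exactly $\frac{1}{r}\bigl(Br^2-A\sqrt{4-r^2+4z^2}\bigr)$), and your sign check via $\sqrt{4-r^2+4z^2}\ge\sqrt{4-r^2}$ and $r^4+2r^2<4$ is a slightly cleaner variant of the paper's squaring-and-bounding step; the $s=1/(1+z^2)$ substitution is a nice alternative but not needed.
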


\begin{proof}
 Since $$g_1'(z)=\frac{ r z \left(r^2+r^4-\sqrt{\left(-4+r^2\right) \left(r^2-4 \left(1+z^2\right)\right)}\right)}{\left(1+z^2\right)^{3/2} \sqrt{4-r^2+4 z^2}}$$ and $$(r^2 + r^4)^2 - (-4 + r^2) (r^2 - 4 (1 + z^2))=2 r^6 + r^8 - 16 (1 + z^2) + 4 r^2 (2 + z^2)\le -5 - 12 z^2\le 0,$$ it follows that $g_1'(z)\le 0$ for $z\ge 0$. Thus $g_1(z)\le g_1(0)$ for $z>0$ what we needed to prove.
\end{proof}
\begin{lemma}\label{l4}
Let \[\begin{split}h_2(z)&=\tan^{-1}\left[\frac{r \left(2 r z+
(2-r^2) \sqrt{4-r^2+4 z^2}\right)}{\left(-2+r^2\right)^2-4 \left(-1+r^2\right) z^2}\right]\\&-\tan^{-1}\left[\frac{r \left(2 r z-(2-r^2) \sqrt{4-r^2+4 z^2}\right)}{\left(-2+r^2\right)^2-4 \left(-1+r^2\right) z^2}\right].\end{split}\] Then $\sup_{z>0} h_2(z)=h_2(0)=2\tan^{-1}\left[r\frac{\sqrt{4-r^2}}{2-r^2}\right]$.
\end{lemma}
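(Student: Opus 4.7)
The plan is to leverage the derivative computation already carried out in the passage preceding Lemma~\ref{l4}. Comparing formulas, the auxiliary function $U(t,z)$ used in the integration by parts depends on $(t,z)$ only through the product $tz$, and its two arctangents are precisely the negatives of those defining $h_2$; hence $U(t,z) = -h_2(tz)$. Applying the chain rule at $t=1$ yields $\left.\partial_t U\right|_{t=1} = -z\, h_2'(z)$, which, combined with the explicit formula for $U'$ already recorded in the paper, gives
\[
h_2'(z) \;=\; -\,\frac{r z\bigl((r^2-2)^2 + 2(r^4-3r^2+2)\, z^2\bigr)}{(1+z^2)\sqrt{4-r^2+4z^2}\bigl(1+(1-r^2)^2 z^2\bigr)}.
\]

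Next I verify that this derivative is strictly negative for $r\in(0,1)$ and $z>0$. The denominator is manifestly positive. For the bracket in the numerator, I factor $r^4-3r^2+2 = (1-r^2)(2-r^2)$, which is strictly positive on $(0,1)$; together with $(r^2-2)^2>0$ this makes the whole bracket positive. Consequently $h_2'(z) < 0$ on $(0,\infty)$, so $h_2$ is strictly decreasing, and $\sup_{z>0} h_2(z) = \lim_{z\to 0^+} h_2(z) = h_2(0)$.

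It remains to evaluate $h_2(0)$: at $z=0$ the two arctangent arguments reduce to $\pm\, r(2-r^2)\sqrt{4-r^2}/(2-r^2)^2 = \pm r\sqrt{4-r^2}/(2-r^2)$, and since $\tan^{-1}$ is odd I obtain $h_2(0) = 2\tan^{-1}[r\sqrt{4-r^2}/(2-r^2)]$, as required. The only point that needs care is the sign bookkeeping behind the identification $U(t,z) = -h_2(tz)$; once it is in place, the derivative formula supplied earlier does all the analytic work, and the one-line factorization $(1-r^2)(2-r^2)$ closes the monotonicity argument, so no serious obstacle arises.
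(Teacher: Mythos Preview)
Your proposal is correct and follows essentially the same strategy as the paper: compute $h_2'(z)$, show it is negative on $(0,\infty)$, and then evaluate $h_2(0)$. The only cosmetic difference is that you obtain $h_2'$ by recycling the earlier formula for $U'$ via the identification $U(t,z)=-h_2(tz)$ and the chain rule, whereas the paper writes down $h_2'(z)$ directly; the resulting expressions differ only by an overall positive factor, and the factorization $r^4-3r^2+2=(1-r^2)(2-r^2)$ you use is exactly what makes the sign transparent in both versions.
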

\begin{proof}
We have $$h_2'(z)=\frac{2r \left(2-r^2\right) z \left(-2+r^2+2 \left(-1+r^2\right) z^2\right)}{\left(1+z^2\right) \sqrt{4-r^2+4 z^2} \left(1+\left(-1+r^2\right)^2 z^2\right)}.$$ So $h_2'(z)\le 0$, and $h_2(z)\le h_2(0)$ for every $z$.
\end{proof}
By using the formula $\frac{1}{2}\log\frac{1+a}{1-a}=\tanh^{-1}(a)$, for $a=\frac{r z \sqrt{4-r^2+4 z^2}}{1+\left(1+r^2\right) z^2}<1$, we obtain
\begin{lemma}\label{l5} For $z>0$ and $0<r<1$ we have
$$h_3(z):= \frac{1}{2}\left(-1+r^2\right) z \tanh^{-1}\left[\frac{r z \sqrt{4-r^2+4 z^2}}{1+\left(1+r^2\right) z^2}\right]\le h_3(0)=0.$$
\end{lemma}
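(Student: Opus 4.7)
The plan is to reduce the inequality $h_3(z)\le h_3(0)=0$ to a pure sign analysis, since $h_3$ factors as a product of two terms whose signs are transparent. Writing
\[
h_3(z)=\frac{1}{2}(r^2-1)z\cdot\tanh^{-1}\!\left[\frac{rz\sqrt{4-r^2+4z^2}}{1+(1+r^2)z^2}\right],
\]
I would note that for $0<r<1$ and $z>0$ the prefactor $\frac{1}{2}(r^2-1)z$ is strictly negative, so the inequality will follow at once provided the $\tanh^{-1}$ factor is nonnegative. Since $\tanh^{-1}$ is odd and increasing on $(-1,1)$ with $\tanh^{-1}(0)=0$, it suffices to check that the argument
\[
a(z,r):=\frac{rz\sqrt{4-r^2+4z^2}}{1+(1+r^2)z^2}
\]
lies in $[0,1)$ for the admissible range of parameters.

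Positivity of $a(z,r)$ is immediate from inspection of the numerator and denominator, so the only genuine computation is the verification that $a(z,r)<1$, i.e.,
\[
rz\sqrt{4-r^2+4z^2}<1+(1+r^2)z^2.
\]
Both sides being positive, I would square and simplify; a routine expansion gives
\[
\bigl(1+(1+r^2)z^2\bigr)^2-r^2z^2(4-r^2+4z^2)=1+\bigl(1+(1-r^2)^2\bigr)z^2+(1-r^2)^2 z^4,
\]
which is manifestly positive. Once this is in hand, $\tanh^{-1}(a(z,r))>0$, the full product is strictly negative for $z>0$, and $h_3(0)=0$ by direct substitution, completing the proof.

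There is no real obstacle here: among the four auxiliary sign lemmas (Lemmas~\ref{l2}--\ref{l5}) feeding into the estimation of $C(z,r)$, this one is the easiest because the $z$-dependence enters only through two transparent sign factors, rather than through a derivative that must first be shown to have constant sign. The hint in the excerpt to rewrite $\tanh^{-1}$ via the logarithm formula $\tanh^{-1}(a)=\frac{1}{2}\log\frac{1+a}{1-a}$ is essentially a reminder that the substantive content is the inequality $a<1$, which is precisely the step I would verify by squaring.
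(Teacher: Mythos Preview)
Your proof is correct and follows the same idea as the paper: once one knows the argument $a=\dfrac{rz\sqrt{4-r^2+4z^2}}{1+(1+r^2)z^2}$ lies in $(0,1)$, the factor $\tanh^{-1}(a)$ is positive and the prefactor $\tfrac12(r^2-1)z$ is negative, so $h_3(z)<0=h_3(0)$. The paper simply asserts $a<1$ in the line preceding the lemma, whereas you supply the explicit verification by squaring; your identity
\[
\bigl(1+(1+r^2)z^2\bigr)^2-r^2z^2(4-r^2+4z^2)=1+\bigl(1+(1-r^2)^2\bigr)z^2+(1-r^2)^2z^4
\]
is correct and makes the positivity transparent.
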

\subsection{Proof of the main result}
\begin{theorem}\label{help}
For $r\in(0,1)$ we have \[\begin{split}\sup_{z>0}{C(z,r)}= C(0,r)= \frac{ \left(r \sqrt{4-r^2} \left(2+r^2\right)+4 \left(1-r^2\right) \tan^{-1}\left[r\frac{\sqrt{4-r^2}}{r^2-2}\right]\right)}{\pi(1+r)r^3}.\end{split}\]
\end{theorem}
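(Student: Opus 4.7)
The plan is to read off the explicit formula from Lemma~\ref{l1} and then show term-by-term that the supremum of $C(r,z)/C(r,0)$ is attained at $z=0$. Concretely, after dividing the formula of Lemma~\ref{l1} by $C(r,0)$, the inequality $\sup_{z>0}C(r,z)\le C(r,0)$ is equivalent to the pointwise estimate
\[
\frac{h_1(z)+h_2(z)+h_3(z)}{\sqrt{1+z^2}}\le h_1(0)+h_2(0)+h_3(0),\qquad z>0,
\]
and I would prove this by establishing $h_i(z)/\sqrt{1+z^2}\le h_i(0)$ for each $i=1,2,3$ separately.

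The bound on $h_1$ is the heart of the argument. I would split $h_1(z)=\frac{r(1+r^2)\sqrt{4-r^2+4z^2}}{2(1-r^2)}+\frac{L(z)}{2(1-r^2)z}$, use Lemma~\ref{l2} to replace $L(z)$ by the linear majorant $rz\sqrt{4-r^2}$, and observe that the resulting upper bound, after division by $\sqrt{1+z^2}$, is exactly $\frac{g_1(z)}{2(1-r^2)}$ in the notation of Lemma~\ref{l3}. Lemma~\ref{l3} then yields $g_1(z)\le g_1(0)=r\sqrt{4-r^2}(2+r^2)$, which is $2(1-r^2)h_1(0)$. For $h_2$, Lemma~\ref{l4} gives $h_2(z)\le h_2(0)$; since $h_2$ stays non-negative along this monotone descent, and $\sqrt{1+z^2}\ge1$, the estimate $h_2(z)/\sqrt{1+z^2}\le h_2(0)$ follows. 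For $h_3$, Lemma~\ref{l5} gives $h_3(z)\le0=h_3(0)$, which is preserved by division by the positive factor $\sqrt{1+z^2}$. Summing the three bounds yields the claim $\sup_{z>0}C(r,z)=C(r,0)$.

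It remains to evaluate $C(r,0)$ in closed form. The only subtlety is that the ratio $L(z)/z$ is indeterminate at $z=0$; l'H\^opital's rule applied with the derivative $L'$ computed in the proof of Lemma~\ref{l2} gives $\lim_{z\to 0^+}L(z)/z=L'(0)=r\sqrt{4-r^2}$, and hence $h_1(0)=\frac{r(2+r^2)\sqrt{4-r^2}}{2(1-r^2)}$. Substituting this together with the appropriately signed $h_2(0)=-2\tan^{-1}\bigl[r\sqrt{4-r^2}/(2-r^2)\bigr]$ and $h_3(0)=0$ into Lemma~\ref{l1}, combining the prefactors $\frac{2(1-r)}{\pi r^3}$ and $\frac{1}{2(1-r^2)}$ over the common denominator $\pi(1+r)r^3$, and rewriting $-\tan^{-1}(x)=\tan^{-1}(-x)$ to convert $2-r^2$ to $r^2-2$, reproduces the formula in the theorem.

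The step I expect to demand the most care is the interplay between Lemmas~\ref{l2} and~\ref{l3} in the bound on $h_1$: the linear majorant for $L(z)$ coming from Lemma~\ref{l2} must be exactly the constant that makes Lemma~\ref{l3}'s $g_1$ emerge after dividing by $\sqrt{1+z^2}$, so the cancellation between these two lemmas is what makes the global maximum land at $z=0$. A secondary point requiring vigilance is the sign tracking for the arctangent contributions from $h_2$, in particular verifying that the $U$-convention of Lemma~\ref{poku} agrees with the arrangement of $\tan^{-1}$ in the final statement.
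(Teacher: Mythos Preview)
Your proposal is correct and follows the paper's own proof essentially verbatim: the paper too bounds $h_1$ by combining Lemma~\ref{l2} with Lemma~\ref{l3} (producing exactly your $g_1$), replaces $h_2(z)$ and $h_3(z)$ by $h_2(0)$ and $h_3(0)$ via Lemmas~\ref{l4} and~\ref{l5}, and then removes the residual $1/\sqrt{1+z^2}$ factor using the non-negativity of $h_2(0)+h_3(0)$. Your caution about sign tracking in $h_2$ is well placed, since Lemma~\ref{l1} and Lemma~\ref{l4} in the paper use opposite sign conventions for $h_2$; the argument goes through with the convention of Lemma~\ref{l4}.
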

\begin{proof}
From Lemmas~\ref{l1},~\ref{l2},~\ref{l3},~\ref{l4} and ~\ref{l5} we obtain  \[\begin{split}
C(z,r)&=\frac{2  (1-r)}{\pi r^3 \sqrt{1+z^2}} (h_1(z)+h_2(z)+h_3(z))\\&\le \frac{2  (1-r)}{\pi r^3 \sqrt{1+z^2}} \left(\frac{r \left(1+r^2\right)  \sqrt{4-r^2+4 z^2}}{2 \left(1-r^2\right) } +\frac{L(z)}{2 \left(1-r^2\right) z}+h_2(0)+h_3(0)\right) \\&\le \frac{2  (1-r)}{\pi r^3 \sqrt{1+z^2}} \left(\frac{r \left(1+r^2\right)  \sqrt{4-r^2+4 z^2}}{2 \left(1-r^2\right) } +\frac{r z \sqrt{4-r^2}}{2 \left(1-r^2\right) z}+h_2(0)+h_3(0)\right)\\&= \frac{  (1-r)}{\pi r^3 (1-r^2)} g_1(z)+\frac{2  (1-r)}{\pi r^3 \sqrt{1+z^2}} \left(h_2(0)+h_3(0)\right)\\ &\le \frac{  (1-r)}{\pi r^3 (1-r^2)} g_1(0)+\frac{2 (1-r)}{\pi r^3} \left(h_2(0)+h_3(0)\right)\\ &= \frac{2  (1-r)}{\pi r^3} \left(h_1(0)+h_2(0)+h_3(0)\right)=C(0,r)\\&= \frac{ \left(r \sqrt{4-r^2} \left(2+r^2\right)+4 \left(1-r^2\right) \tan^{-1}\left[r\frac{\sqrt{4-r^2}}{r^2-2}\right]\right)}{\pi(1+r)r^3}.
\end{split}\]
So $\sup_{z>0}{C(z,r)}= C(0,r)$ what we needed to prove. 
\end{proof}
\begin{remark}
It seems that the same strategy for $n\neq 4$ does not work, because the integrand that appear in definition of the function $C$ is much more complicated. 
\end{remark}

\end{document}